\documentclass[12pt]{amsart}

\usepackage[T1]{fontenc}
\usepackage[utf8]{inputenc}

\usepackage{amsmath,amssymb,amsthm}
\usepackage[backend=biber,style=numeric,doi=true,url=true,isbn=false]{biblatex}
\usepackage[colorlinks,linkcolor=blue,citecolor=blue,urlcolor=blue,pdfpagelabels,pdffitwindow=false]{hyperref}

\usepackage{mathrsfs}
\DeclareMathAlphabet{\mathcal}{OMS}{cmsy}{m}{n}

\newtheorem{theorem}{Theorem}[section]

\newtheorem{lemma}[theorem]{Lemma}
\theoremstyle{remark}
\newtheorem{remark}[theorem]{Remark}

\usepackage{xspace}

\makeatletter
\def\abbrdot{\@ifnextchar.{}{.\@\xspace}}
\makeatother

\newcommand{\eg}{e.g\abbrdot}

\newcommand{\inftyone}{\ensuremath{(\infty, 1)}}

\DeclareTextFontCommand{\textbfup}{\rmfamily\bfseries\upshape}
\newcommand{\strong}[1]{\textbfup{#1}}

\newcommand*{\blank}{\mathord{-}}

\newcommand*{\lr}[3]{\mathopen{}\left#1{#2}\right#3\mathclose{}}
\newcommand*{\lmr}[5]{\mathopen{}\left#1{#2}\,\middle#3\,{#4}\right#5\mathclose{}}
\newcommand*{\parens}[1]{\lr({#1})}
\newcommand*{\bracket}[1]{\lr[{#1}]}
\newcommand*{\rn}[1]{\lr|{#1}|}
\newcommand*{\set}[1]{\lr\{{#1}\}}
\newcommand*{\setbuilder}[2]{\lmr\{{#1}|{#2}\}}

\newcommand*{\Top}{\textbfup{Top}}
\newcommand*{\Hom}{\textup{Hom}}
\newcommand*{\Map}{\textup{Map}}
\newcommand*{\Ho}[1]{\operatorname{Ho} \parens{#1}}

\newcommand*{\id}{\operatorname{id}}

\title{The classical homotopy category is cartesian closed}
\author{Zhen~Lin Low}

\begin{document}
\begin{abstract}
The category of all topological spaces and homotopy classes of maps is cartesian closed.
In fact, the \inftyone-category of all topological spaces and continuous maps, localised at homotopy equivalences, is cartesian closed.  
We model the exponential object using a topological space whose points are continuous maps equipped with putative neighbourhood filters.
In general, this topological space will not even be $T_1$-separated, but the evaluation map induces a natural weak homotopy equivalence witnessing the universal property.
\end{abstract}

\maketitle

\section*{Introduction}

Let $\Top$ be the category of \emph{all} topological spaces and continuous maps.
(We do \emph{not} restrict to a convenient subcategory.)
It is well known that $\Top$ is \emph{not} cartesian closed: see \eg \cite[Theorem~5.3]{EH:2002}.
By probing with the point $1 = \set{\ast}$, we see that any exponential object in $\Top$ must be (up to isomorphism) the set of continuous maps equipped with an appropriate topology, but this turns out to be impossible in some cases.

On the other hand, $\Top$ is known to be \emph{weakly} cartesian closed: given topological spaces $T$ and $Y$, there exist a topological space $E$ and a continuous map $\epsilon : E \times T \to Y$ such that, for all topological spaces $X$ and all continuous maps $f : X \times T \to Y$, there exists a continuous map $g : X \to E$ such that $f = \epsilon \circ \parens{g \times {\id_T}}$.
In other words, writing $\Hom$ for the set of continuous maps, the following is a surjective map:
\[
\epsilon \circ \parens{\blank \times \id_T} : \Hom \parens{X, E} \to \Hom \parens{X \times T, Y}
\]
See \cite{Rosicky:1999}, particularly the proof of Theorem~2.

Write $\rn{K}$ for the geometric realisation of a simplicial set $K$, and write $\Delta^n$ for the standard $n$-simplex.
Given topological spaces $U$ and $V$, we define the simplicial set $\Map \parens{U, V}$ by the following formula:
\[
\Map \parens{U, V}_n = \Hom \parens{\rn{\Delta^n} \times U, V}
\]
Almost by construction, the set of path components is the set of homotopy classes of continuous maps $U \to V$.
In fact, this is a model for the derived hom space in the Strøm model structure on $\Top$.
What we will see is that, for a particular construction of $E$ and $\epsilon$, we obtain a weak homotopy equivalence of simplicial sets:
\[
\epsilon \circ \parens{\blank \times \id_T} : \Map \parens{X, E} \to \Map \parens{X \times T, Y}
\]
In other words, such $E$ and $\epsilon$ constitute an exponential object in the \inftyone-categorical sense.
It follows that the classical homotopy category $\Ho{\Top}$ is also cartesian closed.

The crucial point is this: in allowing \emph{all} topological spaces while also only demanding a homotopy-coherent universal property, the candidate exponential object $E$ need not satisfy any separation property nor any strict universal property.
Indeed, the $E$ we construct fails to be even $T_1$-separated in general.
Instead of having a single point for each continuous map $T \to Y$ and struggling to construct the perfect function space topology, we smear things out by attaching to each continuous map \emph{all} compatible local notions of convergence in $\Hom \parens{T, Y}$.
In some sense, the resulting $E$ combines all possible spaces of continuous functions $T \to Y$, so we call it the omnibus function space.

To be more precise, recall that a \strong{filter} of a partially ordered set $A$ is an upward-closed subset $\Phi \subseteq A$ such that every finite subset of $\Phi$ has a lower bound in $\Phi$.
(In particular, the empty subset must have a lower bound in $\Phi$, so $\Phi$ is non-empty.)
For example, for any $a \in A$, ${\uparrow} \set{a} = \setbuilder{a' \in A}{a \le a'}$ is a filter of $A$.
The set $\mathscr{F} \parens{A}$ of all filters of $A$ has a natural topology \cite[\S 2]{Scott:1972} in which a subset $W \subseteq \mathscr{F} \parens{A}$ is open if and only if:
\[
\forall \Phi \in W \mathpunct{.} \exists a \in \Phi \mathpunct{.} \forall \Psi \in \mathscr{F} \parens{A} \mathpunct{.} a \in \Psi \Rightarrow \Psi \in W
\]
This is called the \strong{Scott topology} on $\mathscr{F} \parens{A}$.
Note that every Scott-open subset of $\mathscr{F} \parens{A}$ is a union of open subsets of the form ${\uparrow} \set{{\uparrow} \set{a}} = \setbuilder{\Phi' \in \mathscr{F} \parens{A}}{a \in \Phi'}$ where $a \in A$.

In this paper, we will only need the case where $A$ is a powerset.
Conventionally, we speak of filters on a set $S$ instead of filters of the powerset $\mathscr{P} \parens{S}$.
For example, if $S$ is a topological space and $s \in S$, then the neighbourhood filter $\mathscr{N}_s = \setbuilder{Z \subseteq S}{s \in Z^\circ}$ is indeed a filter on (the set of points of) $S$.

Consider the topological space $\hat{S}$ where the points are the $\parens{s, \Phi} \in S \times \mathscr{F} \parens{\mathscr{P} \parens{S}}$ such that $\Phi \subseteq {\uparrow} \set{\set{s}}$, with the initial topology induced by the projection map $\hat{S} \to \mathscr{F} \parens{\mathscr{P} \parens{S}}$.
The map $s \mapsto \parens{s, \mathscr{N}_s}$ is clearly injective; in fact, it is a topological embedding $S \to \hat{S}$.
Varying the topology on $S$, we obtain topological embeddings of every topologisation of $S$ in $\hat{S}$.
Furthermore, every point of $\hat{S}$ is in the image of such a topological embedding.
The omnibus function space is obtained as a certain subspace of $\hat{S}$ when $S = \Hom \parens{T, Y}$.

\subsection*{Acknowledgements and disclaimer}

The arguments in this paper were originated and developed by GPT-6 Luna supervised by GPT-6.1 Sol, then redrafted by GPT-6 Astra.
In particular, the construction (but not the name) of the omnibus function space is due to them.
The named author independently verified the mathematical content and substantially rewrote the manuscript.

This research was undertaken independently of the named author's employment and without the use of employer resources.

\section{The omnibus function space}

Let $T$ and $Y$ be topological spaces and write $\Hom \parens{T, Y}$ for the set of continuous maps $T \to Y$.
Recall that a filter $\Phi$ on $\Hom \parens{T, Y}$ \strong{converges continuously} to $f \in \Hom \parens{T, Y}$ just if, for all $t \in T$ and all open $V \subseteq Y$ with $f \parens{t} \in V$, there exists open $U \subseteq T$ with $t \in U$ such that:
\[
S \parens{U, V} = \setbuilder{g \in \Hom \parens{T, Y} }{ U \subseteq g^{-1} V } \in \Phi
\]
For example, the filter ${\uparrow} \set{\set{f}}$ converges continuously to $f$.

The \strong{omnibus function space} $E \parens{T, Y}$ is the topological space where the points are the $\parens{f, \Phi} \in \Hom \parens{T, Y} \times \mathscr{F} \parens{\mathscr{P} \parens{\Hom \parens{T, Y}}}$ such that $\Phi \subseteq {\uparrow} \set{\set{f}}$ and $\Phi$ converges continuously to $f$, with the initial topology induced by the projection to $\mathscr{F} \parens{\mathscr{P} \parens{\Hom \parens{T, Y}}}$ with the Scott topology.
In other words, $E \parens{T, Y}$ has the topology generated by the basis comprising the sets of the form below,
\[
O_S = \setbuilder{\parens{f, \Phi} \in E \parens{T, Y}}{S \in \Phi}
\]
where $S \subseteq \Hom \parens{T, Y}$.

\begin{lemma}
\label{lem:evaluation}
The evaluation map $\epsilon : E \parens{T, Y} \times T \to Y$,
\[
\epsilon \parens{\parens{f, \Phi}, t} = f \parens{t}
\]
is continuous.
\end{lemma}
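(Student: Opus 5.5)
We verify continuity pointwise using the product topology on $E \parens{T, Y} \times T$ and the basis $O_S$ of $E \parens{T, Y}$.

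Fix a point $\parens{\parens{f, \Phi}, t}$ and an open $V \subseteq Y$ with $f \parens{t} \in V$. We must find a basic open neighbourhood $O_S \times U$ of $\parens{\parens{f, \Phi}, t}$ whose image under $\epsilon$ lies in $V$.

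The neighbourhood comes directly from the definition of $E \parens{T, Y}$. Since $\Phi$ converges continuously to $f$, there is an open $U \subseteq T$ with $t \in U$ and $S \parens{U, V} \in \Phi$. Take $S = S \parens{U, V}$. Then $\parens{f, \Phi} \in O_S$ by the definition of $O_S$, and $t \in U$, so $O_S \times U$ is an open neighbourhood of our point.

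It remains to check that $\epsilon$ maps $O_S \times U$ into $V$. Let $\parens{\parens{g, \Psi}, u} \in O_S \times U$, so that $S \parens{U, V} \in \Psi$. The membership condition for points of $E \parens{T, Y}$ gives $\Psi \subseteq {\uparrow} \set{\set{g}}$. Hence every member of $\Psi$ contains $g$; in particular $g \in S \parens{U, V}$, that is, $U \subseteq g^{-1} V$. Since $u \in U$, we get $\epsilon \parens{\parens{g, \Psi}, u} = g \parens{u} \in V$.

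The only substantive point is this last step: the condition $\Phi \subseteq {\uparrow} \set{\set{f}}$ is what ties the filter component back to the underlying map. Without it, knowing $S \parens{U, V} \in \Psi$ would say nothing about $g$ itself. With both defining conditions of $E \parens{T, Y}$ available, the argument is direct and no obstacle is expected.
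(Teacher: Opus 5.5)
Your proof is correct and is essentially the paper's argument. Continuous convergence of $\Phi$ to $f$ supplies an open $U \ni t$ with $S \parens{U, V} \in \Phi$, and the condition $\Psi \subseteq {\uparrow} \set{\set{g}}$ forces $g \in S \parens{U, V}$ for every $\parens{g, \Psi} \in O_{S \parens{U, V}}$, so $O_{S \parens{U, V}} \times U \subseteq \epsilon^{-1} V$.
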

\begin{proof}
Let $\parens{f, \Phi} \in E \parens{T, Y}$, let $t \in T$, and let $V \subseteq Y$ be an open neighbourhood of $f \parens{t}$.
By definition, we have an open neighbourhood $U \subseteq T$ with $t \in U$ and $S \parens{U, V} \in \Phi$.
If $\parens{g, \Psi} \in E \parens{T, Y}$ and $S \parens{U, V} \in \Psi$, then $g \in S \parens{U, V}$, so $U \subseteq g^{-1} V$.
Hence, $O_{S \parens{U, V}} \times U$ is an open neighbourhood of $\parens{\parens{f, \Phi}, t} \in E \parens{T, Y} \times T$ contained in $\epsilon^{-1} V$, as required.
\end{proof}

Note that the construction is functorial in $Y$.
Given a continuous map $\theta : Y \to Y'$ and $\parens{f, \Phi} \in E \parens{T, Y}$, define:
\[
\Phi' = \setbuilder{S' \subseteq \Hom \parens{T, Y'}}{\setbuilder{f \in \Hom \parens{T, Y}}{\theta \circ f \in S'} \in \Phi}
\]
It is straightforward to verify that $\parens{\theta \circ f, \Phi'} \in E \parens{T, Y'}$, and also that $\parens{f, \Phi} \mapsto \parens{\theta \circ f, \Phi'}$ is a continuous map $E \parens{T, Y} \to E \parens{T, Y'}$; we omit the details.

Next, we verify the weak exponential property.
Let $X$ be a topological space and let $h : X \to \Hom \parens{T, Y}$ be a map.
Write $f_x$ for the continuous map $h \parens{x} : T \to Y$ and suppose $\parens{x, t} \mapsto f_x \parens{t}$ is a continuous map $e : X \times T \to Y$.
Let $\Phi_x$ be the set of all $S \subseteq \Hom \parens{T, Y}$ such that $x$ is in the interior of $h^{-1} S$.
The \strong{canonical lift} of $h$ is defined by $c \parens{x} = \parens{f_x, \Phi_x}$.

\begin{lemma}
\label{lem:canonical-lift}
For all $x \in X$, $c \parens{x} \in E \parens{T, Y}$.
For all $S \subseteq \Hom \parens{T, Y}$, $c^{-1} O_S$ is the interior of $h^{-1} S$.
In particular, the canonical lift is a continuous map $c : X \to E \parens{T, Y}$.
\end{lemma}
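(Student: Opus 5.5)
The plan is to check the three claims in order, since the last is immediate from the second.

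\emph{First claim.} Fix $x \in X$. We must show that $\Phi_x$ is a filter on $\Hom \parens{T, Y}$, that $\Phi_x \subseteq {\uparrow} \set{\set{f_x}}$, and that $\Phi_x$ converges continuously to $f_x$.

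The filter axioms follow because taking preimage under $h$ and then interior both preserve inclusions and finite intersections. Concretely, $h^{-1} \Hom \parens{T, Y} = X$, so the whole set lies in $\Phi_x$. If $S \subseteq S'$ then $\parens{h^{-1} S}^\circ \subseteq \parens{h^{-1} S'}^\circ$, giving upward closure. Finally, $\parens{h^{-1}(S_1 \cap S_2)}^\circ = \parens{h^{-1} S_1}^\circ \cap \parens{h^{-1} S_2}^\circ$, which handles finite lower bounds.

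For the inclusion in ${\uparrow} \set{\set{f_x}}$, note that if $S \in \Phi_x$ then $x \in h^{-1} S$, so $f_x \in S$.

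\emph{Continuous convergence.} This is the only step that uses continuity of $e$, and it is where the real content lies, though it is a standard tube-lemma-style argument. Let $t \in T$ and let $V$ be open with $f_x(t) \in V$. By continuity of $e$ at $(x,t)$ there are open sets $W \ni x$ and $U \ni t$ with $W \times U \subseteq e^{-1} V$. Then every $x' \in W$ satisfies $U \subseteq f_{x'}^{-1} V$, so $W \subseteq h^{-1} S \parens{U, V}$. Hence $x$ lies in the interior of $h^{-1} S \parens{U, V}$, that is, $S \parens{U, V} \in \Phi_x$.

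\emph{Second claim.} We have $c^{-1} O_S = \setbuilder{x}{S \in \Phi_x} = \setbuilder{x}{x \in \parens{h^{-1} S}^\circ}$, and this is just the definition of $\Phi_x$.

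\emph{Third claim.} The sets $O_S$ form a basis for the topology of $E \parens{T, Y}$, and each preimage $c^{-1} O_S$ is open because it is an interior. Hence $c$ is continuous.
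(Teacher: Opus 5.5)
Your proposal is correct and takes essentially the same route as the paper. You spell out the filter axioms and the inclusion $\Phi_x \subseteq {\uparrow}\{\{f_x\}\}$, which the paper calls clear; you get continuous convergence from an open rectangle $W \times U \subseteq e^{-1} V$ exactly as the paper does; and you read off $c^{-1} O_S$ and the continuity of $c$ from the definitions in the same way. (Minor point: that rectangle comes directly from the definition of the product topology, so calling the step ``tube-lemma-style'' is a slight misnomer, since the tube lemma is a compactness statement.)
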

\begin{proof}
It is clear that each $\Phi_x$ is a filter and $\Phi_x \subseteq {\uparrow} \set{\set{f_x}}$.
If $V \subseteq Y$ is an open neighbourhood of $f_x \parens{t}$, then by continuity, we have open $N \subseteq X$ and $U \subseteq T$ such that $x \in N$, $t \in U$, and $N \times U \subseteq e^{-1} V$.
It follows that $x \in N \subseteq h^{-1} S \parens{U, V}$, so $S \parens{U, V} \in \Phi_x$.
Hence, $c \parens{x} = \parens{f_x, \Phi_x} \in E \parens{T, Y}$.

Now, let $S \subseteq \Hom \parens{T, Y}$.
By definition, $c \parens{x} \in O_S$ if and only if $S \in \Phi_x$, and $S \in \Phi_x$ if and only if $x$ is in the interior of $h^{-1} S$.
Hence, $c^{-1} O_S$ is the interior of $h^{-1} S$, as claimed.
\end{proof}

In general, there may be many continuous maps $c_0 : X \to E \parens{T, Y}$ such that $\epsilon \parens{c_0 \parens{x}, t} = f_x \parens{t}$,
but they are all homotopic in the fibre over $h$.
More precisely, writing $I$ for the closed unit interval:

\begin{lemma}
\label{lem:lift-unique}
There is a continuous map $\gamma : I \times X \to E \parens{T, Y}$ such that $\gamma \parens{0, x} = c_0 \parens{x}$, $\gamma \parens{1, x} = c \parens{x}$, and for all $i \in I$, $\epsilon \parens{\gamma \parens{i, x}, t} = f_x \parens{t}$.
\end{lemma}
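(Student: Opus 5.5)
The plan is to exploit the non-Hausdorff nature of $E \parens{T, Y}$. I will show that $c_0 \parens{x}$ lies in the closure of $c \parens{x}$ in a uniform way, and then use the step homotopy that jumps from $c_0$ to $c$ immediately after time $0$.

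\emph{Step 1: identify the components of $c_0$.} Write $c_0 \parens{x} = \parens{g_x, \Psi_x}$. The hypothesis $\epsilon \parens{c_0 \parens{x}, t} = f_x \parens{t}$ for all $t$ says exactly that $g_x = f_x$. So $c_0$ and $c$ differ only in the filter component.

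\emph{Step 2 (the key step): show $\Psi_x \subseteq \Phi_x$ for every $x$.} Suppose $S \in \Psi_x$. By continuity of $c_0$, the set $A_S = c_0^{-1} O_S = \setbuilder{x' \in X}{S \in \Psi_{x'}}$ is an open neighbourhood of $x$. For each $x' \in A_S$ we have $\Psi_{x'} \subseteq {\uparrow} \set{\set{f_{x'}}}$, so $S \in \Psi_{x'}$ forces $f_{x'} \in S$, that is, $x' \in h^{-1} S$. Hence $x \in A_S \subseteq h^{-1} S$, so $x$ lies in the interior of $h^{-1} S$, which means $S \in \Phi_x$. By Lemma~\ref{lem:canonical-lift}, this says precisely that $c_0^{-1} O_S \subseteq c^{-1} O_S$ for every $S \subseteq \Hom \parens{T, Y}$.

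\emph{Step 3: define the homotopy.} Set $\gamma \parens{0, x} = c_0 \parens{x}$ and $\gamma \parens{i, x} = c \parens{x}$ for $i > 0$. The endpoint conditions hold by construction. The evaluation condition holds because both $c_0 \parens{x}$ and $c \parens{x}$ have first component $f_x$, by Step~1.

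For continuity it suffices to check preimages of the basic opens $O_S$. Write $A = c_0^{-1} O_S$ and $B = c^{-1} O_S$; both are open in $X$. Then
\[
\gamma^{-1} O_S = \parens{\set{0} \times A} \cup \parens{(0,1] \times B} = \parens{I \times A} \cup \parens{(0,1] \times B},
\]
where the second equality uses $A \subseteq B$ from Step~2. The right-hand side is open in $I \times X$, so $\gamma$ is continuous.

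The only real content is Step~2, namely that continuity of $c_0$ together with $\Psi_x \subseteq {\uparrow} \set{\set{f_x}}$ forces $\Psi_x \subseteq \Phi_x$. The jump must occur immediately after $0$, in the direction of specialization, rather than at $1$. Placing the jump at $1$ would produce the non-open set $\parens{[0,1) \times A} \cup \parens{\set{1} \times B}$.
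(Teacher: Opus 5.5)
Your proposal is correct and follows essentially the same route as the paper. You use the same homotopy that jumps from $c_0$ to $c$ immediately after time $0$, and the same key inclusion $c_0^{-1} O_S \subseteq c^{-1} O_S$, which follows because $c_0^{-1} O_S$ is open, lies inside $h^{-1} S$, and is therefore contained in its interior.
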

\begin{proof}
Define $\gamma$ as follows:
\[
\gamma \parens{i, x} = \begin{cases}
c_0 \parens{x} & \text{if } i = 0 \\
c \parens{x} & \text{if } i > 0
\end{cases}
\]
We need to verify continuity.

It suffices to show that $\gamma^{-1} O_S$ is open for all $S \subseteq \Hom \parens{T, Y}$.
Let $A_0 = c_0^{-1} \parens{O_S}$ and $A_1 = c^{-1} \parens{O_S}$.
By lemma~\ref{lem:canonical-lift}, $A_1$ is the interior of $h^{-1} S$.
If $x \in A_0$ and $c_0 \parens{x} = \parens{f_x, \Psi_x}$, then $f_x \in S$ and $S \in \Psi_x$.
Hence, $A_0 \subseteq h^{-1} S$, and since $A_0$ is open, it follows that $A_0 \subseteq A_1$.
Thus, $\gamma^{-1} O_S$ is the union of two open rectangles:
\[
\gamma^{-1} O_S = \parens{I \times A_0} \cup \parens{\left( 0, 1 \right] \times A_1}
\]
In particular, $\gamma^{-1} O_S$ is open, as required.
\end{proof}

\begin{theorem}
\label{thm:ordinary}
There is an induced natural bijection:
\[
\epsilon \circ \parens{\blank \times \id_T} : \pi_0 \Map \parens{X, E \parens{T, Y}} \to \pi_0 \Map \parens{X \times T, Y}
\]
In particular, $E \parens{T, Y}$ and $\epsilon$ have the universal property of an exponential object in $\Ho{\Top}$.
\end{theorem}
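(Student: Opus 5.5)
We will show that the map $\epsilon \circ \parens{\blank \times \id_T} : \Hom \parens{X, E \parens{T, Y}} \to \Hom \parens{X \times T, Y}$ is surjective, and that two maps $X \to E \parens{T, Y}$ are homotopic whenever their images are homotopic. First note that the map on $\pi_0$ is well defined. A homotopy $H : \rn{\Delta^1} \times X \to E \parens{T, Y}$ is sent to $\epsilon \circ \parens{H \times \id_T}$, which is a homotopy $\rn{\Delta^1} \times X \times T \to Y$, and $\epsilon$ is continuous by lemma~\ref{lem:evaluation}. Naturality in $X$ is clear, since precomposition commutes with $\blank \times \id_T$. Naturality in $Y$ follows from the functoriality noted before, because $\epsilon$ commutes with postcomposition by $\theta$.

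Surjectivity comes from the canonical lift. Given a continuous $e : X \times T \to Y$, put $h \parens{x} = e \parens{x, \blank}$. Lemma~\ref{lem:canonical-lift} shows that $c : X \to E \parens{T, Y}$ is continuous, and by construction $\epsilon \parens{c \parens{x}, t} = e \parens{x, t}$. So surjectivity on $\pi_0$ holds already on the nose.

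Injectivity is the main point. Let $c_0, c_1 : X \to E \parens{T, Y}$ be maps whose images $e_0, e_1 : X \times T \to Y$ are joined by a homotopy $K : I \times X \times T \to Y$. Apply lemma~\ref{lem:canonical-lift} with $X$ replaced by $I \times X$ and $e$ replaced by $K$. This gives a continuous $C : I \times X \to E \parens{T, Y}$ with $\epsilon \circ \parens{C \times \id_T} = K$. Its restriction $C \parens{0, \blank}$ is a lift of $e_0$, though possibly not the canonical lift of $e_0$ itself, since interiors in $I \times X$ and in $X$ differ. This is harmless: $c_0$ and $C \parens{0, \blank}$ are both continuous lifts of the same $h_0$, and lemma~\ref{lem:lift-unique} (applied with $c_0$ replaced by each of them) connects both to the canonical lift of $h_0$. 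Hence $c_0 \simeq C \parens{0, \blank}$. Similarly $C \parens{1, \blank} \simeq c_1$, and $C$ itself is a homotopy from $C \parens{0, \blank}$ to $C \parens{1, \blank}$. Concatenating these homotopies gives $c_0 \simeq c_1$.

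The only delicate step is the mismatch between the restricted lift $C \parens{0, \blank}$ and the canonical lift of $e_0$. Lemma~\ref{lem:lift-unique} handles it because it applies to an arbitrary continuous lift $c_0$, and its proof relies only on $c_0$ being continuous with the correct underlying functions.

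Finally, $\pi_0 \Map \parens{U, V}$ is the set of homotopy classes, which is $\Hom_{\Ho{\Top}} \parens{U, V}$. Therefore the natural bijection $\Hom_{\Ho{\Top}} \parens{X, E \parens{T, Y}} \cong \Hom_{\Ho{\Top}} \parens{X \times T, Y}$ exhibits $E \parens{T, Y}$ with $\epsilon$ as an exponential object in $\Ho{\Top}$. Here we use that $X \times T$ is the product in $\Ho{\Top}$, which holds because products in $\Top$ are compatible with homotopies, the interval being taken as $I \times X$.
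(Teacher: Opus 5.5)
Your proposal is correct and follows the paper's proof. Surjectivity comes from the canonical lift. For injectivity, you lift the homotopy over $I \times X$, use Lemma~\ref{lem:lift-unique} to connect any two lifts of the same map through the canonical lift, and concatenate the resulting homotopies. Your remark that $C(0,\blank)$ need not be the canonical lift of $h_0$ simply makes explicit what the paper's phrase ``any two lifts \ldots\ are homotopic'' already covers.
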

\begin{proof}
We certainly have a natural map, and lemma~\ref{lem:canonical-lift} implies it is surjective.
Injectivity remains to be shown.

Let $h : I \times X \to \Hom \parens{T, Y}$ be the transpose of a continuous map $I \times X \times T \to Y$.
Define $h_i : X \to \Hom \parens{T, Y}$ by $h_i \parens{x} = h \parens{i, x}$.
Any lift $c : I \times X \to E \parens{T, Y}$ of $h$ is a homotopy between lifts of $h_0$ and $h_1$.
By lemma~\ref{lem:lift-unique}, any two lifts $X \to E \parens{T, Y}$ of the same $X \to \Hom \parens{T, Y}$ are homotopic.
Hence, by concatenating homotopies, we have injectivity, as required.
\end{proof}

\begin{remark}
Even when $T$ and $Y$ are Hausdorff, $E \parens{T, Y}$ can fail to be $T_1$-separated.
For example, take $T$ to be the point and $Y$ to be the real line.
Then, $\parens{0, \mathscr{N}_0}$ and $\parens{0, {\uparrow} \set{\set{0}}}$ are distinct points of $E \parens{T, Y}$, where $\mathscr{N}_0$ is the neighbourhood filter of $0$ and ${\uparrow} \set{\set{0}}$ is the principal filter, but $\parens{0, {\uparrow} \set{\set{0}}}$ is in every neighbourhood of $\parens{0, \mathscr{N}_0}$, because $\mathscr{N}_0 \subseteq  {\uparrow} \set{\set{0}}$.
Hence, $\parens{0, {\uparrow} \set{\set{0}}}$ is not a closed point of $E \parens{T, Y}$.
\end{remark}

\section{The homotopy-coherent universal property}

As mentioned in the introduction, we will take it for granted that the simplicial set $\Map \parens{U, V}$ models the space of morphisms $U \to V$ in the \inftyone-category of all topological spaces.
This can be justified by considering the Strøm model structure \cite{Strom:1972}: $\rn{\Delta^\bullet} \times U$ is Reedy-cofibrant and resolves $U$, so $\Map \parens{U, V}$ is a homotopy function complex \cite[Chapter~17]{Hirschhorn:2003}.
We omit the details.

In what follows, we also require the fact that, for all \emph{finite} simplicial sets $K$, there is a natural isomorphism of simplicial sets:
\[
\Map \parens{U, V}^K \to \Map \parens{\rn{K} \times U, V}
\]
Very briefly, this is justified by the fact that geometric realisation of finite simplicial sets preserves finitary products and the fact that the canonical comparison map
\[
\int^{\bracket{n} : \mathbf{\Delta}} K_n \cdot \parens{\rn{\Delta^n} \times U} \to \rn{K} \times U
\]
is a homeomorphism when $K$ is finite.
The key observation is that, when $K$ is finite, the canonical map
\[
\coprod_{n} K_{n, \textup{nd}} \cdot \rn{\Delta^n} \to \rn{K}
\]
is a continuous map between compact Hausdorff spaces, hence universally closed; it follows that the induced map
\[
\coprod_{n} K_{n, \textup{nd}} \cdot \parens{\rn{\Delta^n} \times U} \to \rn{K} \times U
\]
is closed, surjective, and continuous, hence is a topological quotient.

\begin{lemma}
\label{lem:relative}
For all finite simplicial sets $K$, the morphism
\[
\Map \parens{X, E \parens{T, Y}}^K \to \Map \parens{X \times T, Y}^K
\]
induced by $\epsilon : E \parens{T, Y} \times T \to Y$ has connected strict fibres.
In particular, it is surjective on vertices.
\end{lemma}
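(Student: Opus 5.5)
The plan is to reduce to the case $K = \Delta^0$ with $X$ replaced by $\rn{K} \times X$, and then to use the canonical lift together with the straight-line homotopy of lemma~\ref{lem:lift-unique}.

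First, apply the natural isomorphism $\Map \parens{U, V}^K \cong \Map \parens{\rn{K} \times U, V}$. With $U = X$ and $V = E \parens{T, Y}$, and with $U = X \times T$ and $V = Y$, it identifies the vertices of the two sides with
\[
\Hom \parens{\rn{K} \times X, E \parens{T, Y}} \to \Hom \parens{\rn{K} \times X \times T, Y}.
\]
Higher simplices are identified similarly, with $\rn{K} \times X$ replaced by $\rn{\Delta^n} \times \rn{K} \times X$. By naturality, the map in the statement corresponds to postcomposition with $\epsilon \circ \parens{\blank \times \id_T}$. Writing $X' = \rn{K} \times X$, it therefore suffices to prove the following. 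For every topological space $X'$, every fibre of the simplicial map $\Map \parens{X', E \parens{T, Y}} \to \Map \parens{X' \times T, Y}$ over a vertex is a connected simplicial set. By "strict fibre" I mean the fibre over a vertex, taken as the pullback along $\Delta^0 \to \Map \parens{X \times T, Y}^K$.

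Fix a vertex, that is, a continuous map $e : X' \times T \to Y$, and let $h : X' \to \Hom \parens{T, Y}$ be its transpose. By lemma~\ref{lem:canonical-lift}, the canonical lift $c$ is a vertex of the fibre, so the fibre is non-empty and the map is surjective on vertices. Now let $c_0$ be any vertex of the fibre, i.e.\ any continuous $c_0 : X' \to E \parens{T, Y}$ with $\epsilon \parens{c_0 \parens{x}, t} = f_x \parens{t}$. Lemma~\ref{lem:lift-unique} gives $\gamma : I \times X' \to E \parens{T, Y}$ from $c_0$ to $c$ with $\epsilon \parens{\gamma \parens{i, x}, t} = f_x \parens{t}$ for all $i$. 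Identify $I$ with $\rn{\Delta^1}$. Then $\gamma$ is a $1$-simplex of $\Map \parens{X', E \parens{T, Y}}$ whose image in $\Map \parens{X' \times T, Y}$ is the degenerate $1$-simplex $\rn{\Delta^1} \times X' \times T \to X' \times T \xrightarrow{e} Y$. So $\gamma$ is an edge of the strict fibre from $c_0$ to $c$. Every vertex is thus joined to $c$ by an edge, and the fibre is connected.

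The step I expect to need the most care is the bookkeeping in the reduction. One must check that the isomorphism $\Map \parens{U, V}^K \cong \Map \parens{\rn{K} \times U, V}$ is natural in $V$, so that it commutes with postcomposition by $\epsilon$. One must also check that it is compatible with the identification $\parens{\rn{K} \times X} \times T \cong \rn{K} \times \parens{X \times T}$, so that fibres over vertices correspond exactly. Once this is in place, the rest is a direct application of lemmas~\ref{lem:canonical-lift} and~\ref{lem:lift-unique}, which hold for arbitrary spaces, in particular for $X'$.
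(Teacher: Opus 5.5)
Your proposal is correct and is essentially the paper's proof: transport along $\Map \parens{U, V}^K \cong \Map \parens{\rn{K} \times U, V}$, then apply lemmas~\ref{lem:canonical-lift} and~\ref{lem:lift-unique} with $X$ replaced by $\rn{K} \times X$. You also check explicitly that the homotopy $\gamma$ maps to the degenerate edge on $e$, so it is an edge of the strict fibre; this spells out what the paper means by ``homotopic in the fibre''.
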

\begin{proof}
Applying the natural isomorphism mentioned above, it suffices to show that every continuous map $\rn{K} \times X \times T \to Y$ admits a lift $\rn{K} \times X \to E \parens{T, Y}$, and that any two such lifts are homotopic in the fibre.
But $X$ was arbitrary in lemmas~\ref{lem:canonical-lift} and~\ref{lem:lift-unique}, so we are done.
\end{proof}

We use the following sufficient condition for a morphism of simplicial sets to be a weak homotopy equivalence: $\phi : A \to B$ is a weak homotopy equivalence if, for every finite simplicial set $K$, $\phi^K : A^K \to B^K$ has connected strict fibres.
Indeed, given finite simplicial sets $K \subseteq L$ and morphisms $a : K \to A$ and $b : L \to B$ such that the evident square commutes, the hypothesis gives us a lift $\tilde{a} : L \to A$ whose restriction to $K$ is \emph{homotopic} to $a$ over $\phi \circ a$.
Applying this in the case where $\parens{K, L}$ is an arbitrary subdivision of $\parens{\partial \Delta^n, \Delta^n}$, we obtain injectivity on $\pi_{n-1}$ (when $n > 0$) and surjectivity on $\pi_n$, so we have a weak homotopy equivalence as claimed.

\begin{theorem}
\label{thm:mapping-spaces}
There is an induced natural weak homotopy equivalence:
\[
\epsilon \circ \parens{\blank \times \id_T} : \Map \parens{X, E \parens{T, Y}} \to \Map \parens{X \times T, Y}
\]
In particular, $E \parens{T, Y}$ and $\epsilon$ have the universal property of a homotopy-coherent exponential object.
\end{theorem}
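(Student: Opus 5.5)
The proof combines Lemma~\ref{lem:relative} with the sufficient condition for weak homotopy equivalences stated just above. Write $\phi : \Map \parens{X, E \parens{T, Y}} \to \Map \parens{X \times T, Y}$ for the morphism $\epsilon \circ \parens{\blank \times \id_T}$. It is a morphism of simplicial sets: on $n$-simplices it sends $g : \rn{\Delta^n} \times X \to E \parens{T, Y}$ to $\epsilon \circ \parens{g \times \id_T}$, and this is compatible with precomposition by simplicial operators $\rn{\Delta^m} \to \rn{\Delta^n}$.

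First I check that, for each finite $K$, the morphism $\phi^K$ corresponds under the natural isomorphisms
\[
\Map \parens{X, E \parens{T, Y}}^K \cong \Map \parens{\rn{K} \times X, E \parens{T, Y}}, \qquad \Map \parens{X \times T, Y}^K \cong \Map \parens{\rn{K} \times X \times T, Y}
\]
to $\epsilon \circ \parens{\blank \times \id_T}$ with $X$ replaced by $\rn{K} \times X$. This follows from naturality of the comparison isomorphism in $V$ and in $U$. Lemma~\ref{lem:relative} then says exactly that $\phi^K$ has connected strict fibres for every finite simplicial set $K$. The sufficient condition stated before the theorem gives that $\phi$ is a weak homotopy equivalence.

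Naturality in $X$ is immediate, because $\phi$ is defined by postcomposition with $\epsilon$. Naturality in $Y$ uses the functoriality of $E \parens{T, \blank}$ described after Lemma~\ref{lem:evaluation}: evaluation commutes with the induced map $E \parens{T, Y} \to E \parens{T, Y'}$, since the first coordinate becomes $\theta \circ f$.

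For the final clause, a weak equivalence of models of derived mapping spaces, natural in $X$, is the universal property of the exponential in the $\inftyone$-category. This uses that $\Map$ models the derived hom spaces, as assumed at the start of the section.

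The main obstacle is the sufficient condition itself, and in particular the step from ``connected strict fibres'' to a homotopy-relative lifting. One has to be careful that ``homotopic over $\phi \circ a$'' really yields lifts rel boundary for an arbitrary subdivision. Since the paper has already justified that condition, I would treat it as given. The only remaining care is the identification of $\phi^K$ via the natural isomorphism for finite $K$, which the paper has also justified.
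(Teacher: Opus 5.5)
Your proposal is correct and follows the paper's proof. Lemma~\ref{lem:relative} gives connected strict fibres of $\phi^K$ for every finite $K$, and the sufficient condition stated before the theorem then yields the weak homotopy equivalence. Your extra remarks make explicit what the paper leaves implicit: the identification of $\phi^K$ via $\Map \parens{U, V}^K \cong \Map \parens{\rn{K} \times U, V}$, which is already part of the proof of Lemma~\ref{lem:relative}, and naturality in $X$ and $Y$.
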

\begin{proof}
We certainly have a natural simplicial map, and as argued above, lemma~\ref{lem:relative} supplies a sufficient condition for it to be a weak homotopy equivalence.
\end{proof}

\printbibliography
\end{document}